\newcommand{\q}{\quad}
\def\rr{{\mathbb R}}
\def\rz{{{\rr}^n}}
\def\zz{{\mathbb Z}}
\def\lz{\lambda}
\def\l{\left}
\def\r{\right}
\def\BMO{\mathrm{BMO}}
\def\CBMO{\mathrm{CBMO}}
\newcommand{\supp}{\mathop{\textup{supp}}}
\def\nH{\mathcal {H}}
\newcommand{\f}{\frac}
\newtheorem{theorem}{Theorem}[section]
\newtheorem{lemma}[theorem]{Lemma}
\theoremstyle{definition}
\newtheorem{definition}[theorem]{Definition}
\newtheorem{proposition}[theorem]{Proposition}
\theoremstyle{remark}
\newtheorem{remark}[theorem]{Remark}
\numberwithin{equation}{section}
\begin{document}

\title{Endpoint Estimates for N-dimensional Hardy Operators and Their Commutators}

\author{Fayou Zhao}
\address{Department of Mathematics,
Shanghai University, Shanghai, 200444, P.R. China}
\email{zhaofayou2008@yahoo.com.cn}

\author{Zunwei Fu}
\address{Department of Mathematics, Linyi University, Linyi Shandong, 276005, P.R. China}
\email{lyfzw@tom.com}

\author{Shanzhen Lu}
\address{School of Mathematical Sciences, Beijing Normal University, Beijing,
100875, P.R. China} \email{lusz@bnu.edu.cn}
\thanks{The first author was supported in part by NSFC (10871024, 10971130) and
 Shanghai LADP (J50101). The second author was supported in part by NSFC (10871024, 10901076) and
 NSF of Shandong (Q2008A01) and the third author  was supported in part by NSFC (10871024, 10931001).}

 \subjclass[2010]{Primary 42B20, 42B30. Secondary 47B47.}

\date{Received by the editors ,  and, in revised form, , .}


\keywords{ Hardy operator, commutator, Hardy space, BMO space}

\begin{abstract}
In this paper, it is proved that the higher dimensional Hardy operator is bounded from Hardy space to Lebesgue space.
The endpoint estimate for the commutator generated by Hardy operator and (central) BMO function is also discussed.
\end{abstract}

\maketitle

%

\section{Introduction}
Let $f$ be in $L^{p}(\mathbb{R}^1)$, and the one-dimensional Hardy operator $H$ be
defined by
$$Hf(x)=\frac{1}{x}\int^x_{0}f(t)\,dt,\,\, x\neq0.$$ A celebrated Hardy integral
inequality [7] can be formulated as
$$\|Hf\|_{L^{p}(\mathbb{R}^1)}\leq \frac{p}{p-1}\|f\|_{L^{p}(\mathbb{R}^1)},$$ where $1<p<\infty$ and the constant $\frac{p}{p-1}$ is
the best possible. In 1995,
Christ and Grafakos in \cite{CG}
gave its $n$-dimensional equivalent version, $\mathcal{H}$,  defined by

$$
  \mathcal{H} (f)(x)=\frac{1}{\upsilon_n|x|^n}\int_{|y|<|x|}f(y)\,
  dy,\,\,\,\,  x\in \rz\backslash\{0\},
$$
where  $\upsilon_n$ is the volume of the unit ball in $\rz$ and they also proved that
$$\|\nH\|_{L^p(\rz)\rightarrow L^p(\rz)}=\frac{p}{p-1},\,\, 1<p<\infty.$$ In what follows, we will work on $\rz$. We pose the first question as follows.

\medskip
\noindent{\textbf{Question 1.}} {\it Is $\nH$ bounded on $L^1(\rz)$}?
\medskip

In fact, it doesn't hold if we take  $f(x)=|x|^{\alpha}\chi_{B(0, R)}(|x|)$, $\alpha>-n$. Here and in what follows, denote by $B(x, R)$ the ball centered at $x$ with radius $R>0$, $|B(x, R)|$ the Lebesgue measure of $B(x, R)$
and $f_{B(x,\, R)}=\frac{1}{|B(x,\, R)|}\int_{B(x,\, R)}f(y)dy.$

The weak $L^1(\rz)$ is defined as the set
of all measurable functions $f$ such that
$$\|f\|_{L^{1,\infty}(\rz)}=\sup_{\lz>0}\lz|\{x\in \rz:|f(x)|>\lz\}|$$
is finite. As we know, the weak $L^1$ spaces are larger than the usual $L^1$ spaces. More recently, we \cite{FLZ1} got

\begin{proposition} \label{prop1}
$\nH$ maps from $L^1(\rz)$ to $L^{1,\infty}(\rz)$ with the sharp bound 1.
\end{proposition}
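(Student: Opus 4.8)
The plan is to exploit the radial structure of $\nH$ together with a crude pointwise bound to get the upper estimate, and then to exhibit an explicit extremizer showing that the constant $1$ cannot be lowered.

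First I would observe that $\nH(f)(x)=f_{B(0,\,|x|)}$ depends on $x$ only through $|x|$, and that it obeys the trivial pointwise estimate
$$
|\nH(f)(x)|\le \frac{1}{\upsilon_n|x|^n}\int_{|y|<|x|}|f(y)|\,dy\le \frac{\|f\|_{L^1(\rz)}}{\upsilon_n|x|^n}.
$$
Consequently, for any $\lz>0$ the inequality $|\nH(f)(x)|>\lz$ forces $\upsilon_n|x|^n<\|f\|_{L^1(\rz)}/\lz$, so the super-level set is contained in the ball $B(0,R)$ with $R=\big(\|f\|_{L^1(\rz)}/(\lz\upsilon_n)\big)^{1/n}$. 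Since $|B(0,R)|=\upsilon_nR^n=\|f\|_{L^1(\rz)}/\lz$ \emph{exactly}, I obtain
$$
\lz\,\big|\{x\in\rz:|\nH(f)(x)|>\lz\}\big|\le \lz\cdot\frac{\|f\|_{L^1(\rz)}}{\lz}=\|f\|_{L^1(\rz)},
$$
and taking the supremum over $\lz>0$ yields $\|\nH f\|_{L^{1,\infty}(\rz)}\le\|f\|_{L^1(\rz)}$, i.e. the bound is at most $1$.

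For the reverse direction I would test on $f=\chi_{B(0,1)}$, for which $\|f\|_{L^1(\rz)}=\upsilon_n$. A direct computation gives $\nH(f)(x)=1$ when $|x|<1$ and $\nH(f)(x)=|x|^{-n}$ when $|x|\ge 1$, so for each $\lz\in(0,1)$ the set $\{x:|\nH(f)(x)|>\lz\}$ is precisely the ball $B(0,\lz^{-1/n})$, of measure $\upsilon_n\lz^{-1}$. Hence $\lz\,|\{x:|\nH(f)(x)|>\lz\}|=\upsilon_n=\|f\|_{L^1(\rz)}$, so $\|\nH f\|_{L^{1,\infty}(\rz)}=\|f\|_{L^1(\rz)}$ on this example and the constant $1$ is attained.

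The argument is short, and the only genuine subtlety is recognizing that the weak-type norm is governed entirely by the radial profile of $\nH f$: once one sees that every super-level set of $|\nH f|$ is a ball whose measure is controlled by $\|f\|_{L^1(\rz)}/\lz$, both the upper bound and its sharpness follow from the explicit volume formula $|B(0,R)|=\upsilon_nR^n$. I expect the point requiring most care to be the boundary region $|x|<1$ in the test computation, where the average equals $1$ rather than $|x|^{-n}$; including it is exactly what makes the super-level set the full ball rather than an annulus, and hence what produces equality rather than a strict inequality.
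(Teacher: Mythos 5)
Your proof is correct, but there is nothing in the paper to compare it against: the paper does not prove this proposition at all, it simply quotes it from the preprint \cite{FLZ1}, so your argument supplies a self-contained proof where the paper gives none. Both halves check out. The upper bound follows exactly as you say, since $|\nH f(x)|\le \|f\|_{L^1(\rz)}/(\upsilon_n|x|^n)$ forces $\{x:|\nH f(x)|>\lz\}\subset B(0,R)$ with $|B(0,R)|=\upsilon_nR^n=\|f\|_{L^1(\rz)}/\lz$, and the extremizer computation is right: $\nH(\chi_{B(0,1)})(x)=\min(1,|x|^{-n})$, so for every $\lz\in(0,1)$ the super-level set is exactly $B(0,\lz^{-1/n})$ and $\lz\,|\{|\nH f|>\lz\}|=\upsilon_n=\|f\|_{L^1(\rz)}$, which shows the constant $1$ is attained rather than merely approached. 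This is the natural argument for the result, and your closing remark is the key observation: including the region $|x|<1$, where the average equals $1$, is what makes equality (not just a lower bound tending to $1$) hold.
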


\begin{definition}\cite{Lu1}\label{d30} A function $a\in L^\infty$ is called a $(1,\infty, 0)$-atom, if it satisfies the following conditions:
(1) $\supp (a)\subset B(x_0, r)$;\,\,
(2) $\|a\|_{\infty}\leq |B(x_0,r)|^{-1}$;\,\,
(3) $\int a(x)dx=0$.
\end{definition}

As a proper subspace of $L^1$, the atomic Hardy space $H^1$ is defined by
$$
H^1=\l\{f\in \mathcal(\rz): f(x)=^{{S}'}\sum_k\lz_ka_k(x),
and \ \sum_k|\lz_k|^p<\infty\r\},\
$$
each $a_k$  is  a $(1, \infty,0)$-atom, and $f$ is a tempered distribution.
Setting $H^1$ norm of $f$ by
$$\|f\|_{H^1}:= \inf \sum_{k}^\infty|\lz_k|.$$
where the infimum is taken over all the decompositions of $f=\sum_k\lz_ka_k$ as above.

Corresponding to Question 1, we naturally ask

\medskip
\noindent{\textbf{Question 2.}} {\it Is $\nH$ bounded on $H^1(\rz)$}?
\medskip

Motivated by the counterexample in  \cite{Gol}, we take \\
$$ b(x)=\left\{\begin{array}{ll}
1-2^n,&\quad \mbox{for $|x|\leq 1,$}
\\ 1,&\quad \mbox{for
$1<|x|\leq 2,$}\\  0,&\quad \mbox{other.}\end{array}\right.$$
Then
$$ \nH(b)(x)=\left\{\begin{array}{ll}
1-2^n,&\quad \mbox{for $|x|\leq 1,$}
\\ 1-2^n|x|^{-n},&\quad \mbox{for
$1<|x|\leq 2,$}\\  0,&\quad \mbox{other.}\end{array}\right.$$
Obviously, $b$ is an atom of $H^1$, however, $\int\nH(b)(x)dx=-n2^n\ln2\neq 0$, therefore
$\nH(b) \not\in H^1(\rz)$.

Since $\nH$ is not bounded on $L^{1}$ or $H^{1}$, a natural question is: does  $\nH$ map $H^1$ to $L^1$? The answer is confirmed. It is just one of our main results in this paper. Our proof is based on the atomic decomposition and certain beautiful and elegant ideas. Finally, we will discuss the endpoint estimate for the commutators generated by Hardy operators and (central) $\BMO$ functions.

Throughout this paper, $A\sim B$ denotes $A$ is equivalent to $B$, that
means there exist two positive constants $c_{1}$ and $c_{2}$ such
that $c_{1}A\leq B\leq c_{2}A$.  For $1<p, p'<\infty$, $p$ and $p'$ are  conjugate indices, that is,
$1/p+1/{p'}=1$. Formally, we will also define $p=1$  as conjugate to $p'=\infty$ and vice versa.

\section{Endpoint estimates for $n$-dimensional Hardy operators}

\begin{theorem}
 $\mathcal {H}$ maps from $H^1(\rz)$ to $L^1(\rz)$.
\end{theorem}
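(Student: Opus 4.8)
The natural approach is atomic decomposition. Since $H^1$ consists of sums $f = \sum_k \lambda_k a_k$ with $\sum_k |\lambda_k| \sim \|f\|_{H^1}$ and each $a_k$ a $(1,\infty,0)$-atom, it suffices by linearity and the triangle inequality to prove the uniform estimate $\|\mathcal{H}(a)\|_{L^1} \leq C$ for every atom $a$, with $C$ independent of the atom. Let me think about what the atom structure and cancellation give us.

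Let $a$ be supported in $B(x_0, r)$ with $\|a\|_\infty \leq |B(x_0,r)|^{-1}$ and $\int a = 0$. Since $\mathcal{H}(a)(x) = \frac{1}{\upsilon_n |x|^n}\int_{|y|<|x|} a(y)\,dy$ only depends on $a$ through its integral over $|y|<|x|$, the cancellation condition should be exploited through this radial truncation. The key observation is that for $|x|$ large enough that $B(x_0,r) \subset B(0,|x|)$, the numerator $\int_{|y|<|x|} a(y)\,dy = \int a(y)\,dy = 0$, so $\mathcal{H}(a)(x) = 0$ there. Thus $\mathcal{H}(a)$ is supported where $B(0,|x|)$ does not yet fully contain the atom's support, i.e. roughly $|x| \lesssim |x_0| + r$. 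The integral $\int |\mathcal{H}(a)(x)|\,dx$ therefore localizes. My plan is to estimate the numerator $\bigl|\int_{|y|<|x|} a(y)\,dy\bigr|$ using the size bound $\|a\|_\infty \leq |B(x_0,r)|^{-1}$, controlling it by the measure of the overlap region $B(x_0,r)\cap B(0,|x|)$ and, via the mean-zero property, by the complementary sliver, then integrate $|x|^{-n}$ against $dx$ over the effective support region.

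The cleanest way to organize this is to split into the two standard geometric cases for atoms. \emph{Case 1:} the atom is far from the origin relative to its size, say $|x_0| > 2r$, so the origin lies outside $B(x_0,r)$. \emph{Case 2:} the atom is near the origin, $|x_0| \leq 2r$, so $B(x_0,r) \subset B(0, 3r)$ and we may as well treat $x_0 = 0$. In each case I expect to bound $\|\mathcal{H}(a)\|_{L^1}$ by passing to polar coordinates (or integrating over annuli in $|x|$) and carefully tracking, for each radius $|x|=\rho$, the measure and the cancellation of $\int_{|y|<\rho} a(y)\,dy$ against the factor $\rho^{-n} \cdot \rho^{n-1}\,d\rho$ coming from $|x|^{-n}\,dx$.

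The main obstacle will be Case 1, where the atom is supported away from the origin. There the full cancellation $\int a = 0$ is only available once $|x|$ exceeds $|x_0|+r$, and for intermediate radii $|x_0| - r < |x| < |x_0| + r$ we only see a partial integral of $a$. Extracting enough decay from this transitional region to keep the $L^1$ norm bounded uniformly — rather than picking up a logarithmic divergence like the one exhibited in the $H^1$ counterexample $b$ — is the delicate point. I would handle it by comparing the partial integral $\int_{|y|<|x|} a$ with the full (vanishing) integral to write it as $-\int_{|y|\geq |x|} a$, bounding this by $\|a\|_\infty$ times the measure of the thin spherical shell $B(x_0,r)\cap \{|y|\geq |x|\}$, and checking that integrating $|x|^{-n}$ times this shell measure over the range $|x|\in(|x_0|-r,\,|x_0|+r)$ yields a constant independent of $x_0$ and $r$.
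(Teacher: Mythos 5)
Your proposal is correct, and it takes a genuinely different --- and in fact more careful --- route than the paper. The paper also reduces to a uniform bound on atoms, but it then disposes of the center $x_0$ by translation: it sets $\tilde a(x)=a(x+x_0)$ and asserts that it is enough to bound $\int_{\mathbb{R}^n}|\mathcal{H}\tilde a|$, i.e.\ to treat only atoms supported in balls centered at the origin. Since $\mathcal{H}$ averages over the origin-centered balls $\{|y|<|x|\}$, it commutes with dilations but not with translations, so $\|\mathcal{H}a\|_{L^1}$ and $\|\mathcal{H}\tilde a\|_{L^1}$ need not be comparable: for $n=1$ and $a=\tfrac12\bigl(\chi_{(10,11)}-\chi_{(9,10)}\bigr)$ one has $\mathcal{H}\tilde a\equiv 0$ by oddness while $\mathcal{H}a\not\equiv 0$. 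Thus the paper's translation step is unjustified, and what its computation actually establishes is only your Case 2: for an atom whose support ball is centered at the origin, the size estimate gives the bound $2^n\upsilon_n$ on $B(0,2r)$ and cancellation makes $\mathcal{H}a$ vanish outside. Your Case 1 --- atoms with $|x_0|>2r$ --- is precisely what the translation trick sweeps under the rug, and your plan for it goes through: $\mathcal{H}a$ vanishes for $|x|\le|x_0|-r$ (no overlap) and for $|x|\ge|x_0|+r$ (full cancellation), while on the transitional annulus $\bigl|\int_{|y|<|x|}a\bigr|=\bigl|\int_{|y|\ge|x|}a\bigr|\le\|a\|_\infty|B(x_0,r)|=1$ (the sliver refinement is not even needed), so $\int|\mathcal{H}a|\le\frac{1}{\upsilon_n}\int_{|x_0|-r\le|x|\le|x_0|+r}|x|^{-n}\,dx=n\ln\frac{|x_0|+r}{|x_0|-r}\le n\ln 3$, uniformly in $x_0$ and $r$; in Case 2, $B(x_0,r)\subset B(0,3r)$ together with the pointwise bound $|\mathcal{H}a|\le\|a\|_\infty$ and vanishing for $|x|\ge 3r$ gives at most $3^n$. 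So your two-case decomposition is not merely an alternative: it supplies the missing half of the argument and is the route by which the theorem actually gets proved, at the modest cost of being longer than the paper's (incomplete) one-case computation.
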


\begin{proof} Assume $a$ is an atom of $H^1$ and satisfies the following conditions:
(i) $\supp (a)\subset B(x_0, r)$,
(ii)$\|a\|_{\infty}\leq |B(x_0,r)|^{-1}$ and
(iii)$\int a(x)dx=0$. We now take $\tilde{a}(x)=a(x+x_0)$. Then $\tilde{a}$ satisfies:
(i) $\supp (\tilde{a})\subset B(0, r)$;
(ii)$\|\tilde{a}\|_{\infty}\leq |B(0,r)|^{-1}$ and
(iii)$\int\tilde{a}(x)dx=0$.

 It is enough to prove that $\int|\nH \tilde{a}(x)|dx<C$, where $C$ is independent of $\tilde{a}$.
Suppose that $\supp \tilde{a}\subset B(0,r)$ for $r>0$.
{\allowdisplaybreaks
\begin{eqnarray*}
&&\int_{\rz}\upsilon_n|\nH(\tilde{a})(x)|dx\\
&=&\int_{B(0,2r)}\l|\f{1}{|x|^n}\int_{|y|<|x|}\tilde{a}(y)dy\r|dx
+\int_{\rz\backslash B(0,2r)}\l|\f{1}{|x|^n}\int_{|y|<|x|}\tilde{a}(y)dy\r| dx\\
&=&\int_{B(0,2r)}\l|\f{1}{|x|^n}\int_{\{|y|<|x|\}\cap B(0,r)}\tilde{a}(y)dy\r| dx\\
&&\q\q\q+ \int_{\rz\backslash B(0,2r)}\l|\f{1}{|x|^n}\int_{\{|y|<|x|\}\cap B(0,r)}\tilde{a}(y)dy\r| dx\\
&:=&I_1+I_2.
\end{eqnarray*}
}In the last two estimates we used that $\supp \tilde{a}\subset B(0,r)$.
For $I_1$, we have the following estimate
\begin{eqnarray*}
I_1&\leq& \int_{B(0,2r)}\f{1}{|x|^n}\int_{\{|y|<|x|\}\cap B(0,r)}\l|\tilde{a}(y)\r| dydx\\
&\leq& \int_{B(0,2r)}\f{1}{|x|^n}\f{1}{|B(0,r)|}\int_{|y|<|x|}dy dx\\
&=&\f{1}{|B(0,r)|}\int_{B(0,2r)}\f{\upsilon_n|x|^n}{|x|^n}dx\\
&=&2^n\upsilon_n.
\end{eqnarray*}
For $I_2$, since $x\in \rz\backslash B(0,2r)$, we have $\{|y|<|x|\}\cap B(0,r)=B(0,r)$. Then
\begin{eqnarray*}
I_2&=&\int_{\rz\backslash B(0,2r)}\l|\f{1}{|x|^n}\int_{\{|y|<|x|\}\cap B(0,r)}\tilde{a}(y)dy\r|dx\\
&=&\int_{\rz\backslash B(0,2r)}\f{1}{|x|^n}\l|\int_{B(0,r)}\tilde{a}(y)dy\r|dx\\
&=& 0.
\end{eqnarray*}\end{proof}

\begin{remark} If we divide $\rz$ into two parts: $B(c_0r):=B(0,c_0 r)$ and $\rz\backslash B(c_0r)$. Here $c_0>0$.
Repeated the previous argument leads to for $c_0\geq 1$
$$I_1\leq c_0^n \upsilon_n,\ I_2=0.$$
But for $0<c_0<1$, we have
$$I_1\leq {c_0}^n \upsilon_n,\ I_2=\infty.$$
Therefore, we can obtain that the supremum whose the Hardy operator maps from $H^1$ to $L^1$ is 1 when $c_0=1$.
\end{remark}

Let $\chi_k=\chi_{C_k}$, $C_k=B_k\backslash{B_{k-1}}$, and $B_k=\{x:|x|\leq 2^k\}$. Let $1<p<\infty$. The Herz space $K_p$
is expressed as the intersection of the corresponding weighted
$L^{p}$ spaces. In \cite{Ga}, \cite{ly2} the space
$K_p$ is endowed with the expression
$$\|f\|_{K_p}:=\sum_{k\in\mathbb{Z}}2^{kn/{p'}}\|f\chi_k\|_p<\infty.$$

\begin{definition}\label{d31} Let $1<p<\infty$. A function $a$ on $\rz$ is said to be a central
$(1, p)$-atom if (1) $\supp (a)\subset B(0, r)$;\,\,
(2) $\|a\|_{p}\leq |B(0,r)|^{1/p-1}$;\,\,
(3) $\int a(x)dx=0$.
\end{definition}

\begin{lemma}\cite{ly2} \label{lem2} Let $f\in L^1(\rz)$ and $1<p<\infty$. Then $f\in HK_p(\rz)$ if and only if
$f$ can be represented as
$$f(x)=\sum_j\lz_j a_j(x),$$
each $a_j$ is a central $(1,p)$-atom and
$ \sum_j|\lz_j|<\infty$. Moreover,
$$\|f\|_{HK_p}\sim \inf\left\{\sum_i|\lz_i|\right\},$$
where the infimum is taken over all decompositions of $f$ as above.
\end{lemma}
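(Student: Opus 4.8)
The plan is to prove the two implications separately, reading the $HK_p$ norm through the grand (smooth) maximal function $\mathcal G$, so that $\|f\|_{HK_p}\sim\|\mathcal G f\|_{K_p}$, and using that for $1<p<\infty$ both $\mathcal G$ and the Hardy--Littlewood maximal operator $M$ are bounded on $L^p(\rz)$ while $|f|\le\mathcal G f$ pointwise. For the sufficiency I would establish a uniform bound $\|\mathcal G a\|_{K_p}\le C$ for every central $(1,p)$-atom $a$ with $\supp a\subset B(0,r)$, with $C$ independent of $a$ and $r$. Writing $\|\mathcal G a\|_{K_p}=\sum_k 2^{kn/p'}\|(\mathcal G a)\chi_k\|_p$ and splitting at the scale $2^{k_0}\sim r$: on the local annuli $2^k\le Cr$ I would use $\|(\mathcal G a)\chi_k\|_p\le\|\mathcal G a\|_p\le C\|a\|_p\le C|B(0,r)|^{1/p-1}$ together with the geometric sum $\sum_{2^k\le Cr}2^{kn/p'}\sim r^{n/p'}$, which cancels $|B(0,r)|^{1/p-1}\sim r^{n(1/p-1)}$ exactly since $1/p+1/p'=1$; on the far annuli $2^k\gg r$ I would exploit the cancellation $\int a=0$ and $\|a\|_1\le1$ to obtain the decay $\mathcal G a(x)\le Cr|x|^{-n-1}$, so that $2^{kn/p'}\|(\mathcal G a)\chi_k\|_p\le C\,r\,2^{-k}$, again summing to a constant. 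Subadditivity of $\|\mathcal G(\cdot)\|_{K_p}$ and $\mathcal G\big(\sum_j\lz_j a_j\big)\le\sum_j|\lz_j|\,\mathcal G a_j$ then yield $\|f\|_{HK_p}\le C\sum_j|\lz_j|$.

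For the necessity I would first observe that $\int f=0$ is forced, since every central atom has mean zero. Putting $S_k=\int_{B_k}f$, I would set
\[
b_k=f\chi_k+\frac{S_{k-1}}{|B_{k-1}|}\chi_{B_{k-1}}-\frac{S_k}{|B_k|}\chi_{B_k},
\]
which is supported in $B_k$ and satisfies $\int b_k=0$ because $S_k-S_{k-1}=\int_{C_k}f$. The correction terms telescope, and since $S_k/|B_k|=f_{B_k}\to0$ as $k\to\pm\infty$ (from $f\in L^1$ and $\int f=0$) one obtains $\sum_k b_k=f$ almost everywhere. Normalizing $a_k=b_k/\lz_k$ with $\lz_k=\|b_k\|_p|B_k|^{1-1/p}$ makes each $a_k$ a central $(1,p)$-atom, and the task reduces to controlling $\sum_k|\lz_k|\sim\sum_k2^{kn/p'}\|b_k\|_p$.

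The crux, and the step I expect to be the main obstacle, is this final estimate. The main term $\sum_k2^{kn/p'}\|f\chi_k\|_p$ is controlled immediately by $|f|\le\mathcal G f$, giving $\sum_k2^{kn/p'}\|f\chi_k\|_p\le\|f\|_{HK_p}$. The delicate part is the telescoping corrections: computing powers of two collapses $2^{kn/p'}|B_k|^{1/p-1}$ to a constant (once more because $1/p+1/p'=1$), so the corrections contribute only $\sum_k|S_k|$. A direct H\"older bound on $|S_k|$ is too lossy and produces a divergent weight; the right move is to dominate the average by the maximal function on the annulus, $|S_k|/|B_k|=|f_{B_k}|\le Mf(x)$ for every $x\in C_k$, which gives $|S_k|\le C\,2^{kn/p'}\|(Mf)\chi_k\|_p$ and hence $\sum_k|S_k|\le C\|Mf\|_{K_p}\le C\|f\|_{HK_p}$. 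Combining, $\sum_k|\lz_k|\le C\|f\|_{HK_p}$, which together with the uniform atom bound of the first part also yields convergence of $\sum_k\lz_k a_k$ to $f$ in $HK_p$, completing the norm equivalence.
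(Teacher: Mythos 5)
First, a point of reference: the paper does not prove this lemma at all --- it is quoted from \cite{ly2} --- so your proposal can only be measured against the standard proof of the atomic decomposition for Herz-type Hardy spaces (Garcia-Cuerva \cite{Ga}, Lu--Yang \cite{ly2}). Your sufficiency half is correct and essentially the standard argument: uniform bound on $\|\mathcal{G}a\|_{K_p}$ by splitting at the scale of the supporting ball, $L^p$-boundedness of $\mathcal{G}$ on the inner annuli, and the moment condition giving $\mathcal{G}a(x)\le Cr|x|^{-n-1}$ outside. Your necessity half also starts with the right decomposition --- the blocks $b_k=f\chi_k+\frac{S_{k-1}}{|B_{k-1}|}\chi_{B_{k-1}}-\frac{S_k}{|B_k|}\chi_{B_k}$ with telescoping corrections are exactly what the cited authors use --- but it breaks at precisely the step you flagged as the crux.

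The inequality $\sum_k|S_k|\le C\|Mf\|_{K_p}\le C\|f\|_{HK_p}$ cannot work, because $\|Mf\|_{K_p}=\infty$ for \emph{every} nonzero $f\in L^1(\rz)$. Indeed, for $|x|$ large the ball $B(x,2|x|)$ contains $B(0,|x|)$ and hence most of the mass of $f$, so $Mf(x)\ge c\|f\|_{L^1}|x|^{-n}$; therefore
\begin{equation*}
2^{kn/p'}\|(Mf)\chi_k\|_p\ \ge\ c\,\|f\|_{L^1}\,2^{kn/p'}\,2^{-kn}\,|C_k|^{1/p}\ \ge\ c'\,\|f\|_{L^1}
\end{equation*}
for all large $k$, and the series defining the $K_p$-norm diverges. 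The Hardy--Littlewood maximal operator destroys the cancellation of $f$, while membership in $K_p$ demands decay strictly faster than $|x|^{-n}$ at infinity; this is the same phenomenon that makes $Mf\notin L^1$ for $f\in H^1$. The repair, which is how \cite{Ga} and \cite{ly2} actually argue, is to estimate $S_k=\int_{B_k}f$ through the \emph{smooth} maximal function, which does see cancellation: take bumps $\varphi_k$ with $\chi_{B_k}\le\varphi_k\le\chi_{B_{k+1}}$ and $|\nabla\varphi_k|\le C2^{-k}$, write $S_k=\int f\varphi_k-\int_{C_{k+1}}f\varphi_k$, observe that $2^{-(k+2)n}\varphi_k$ is (up to a fixed constant) an admissible test function at scale $2^{k+2}$ centered at any $x\in C_{k+1}$, whence $|\int f\varphi_k|\le C2^{kn}\inf_{x\in C_{k+1}}\mathcal{G}f(x)\le C2^{kn/p'}\|(\mathcal{G}f)\chi_{k+1}\|_p$, and bound the error by $\int_{C_{k+1}}|f|\le C2^{kn/p'}\|f\chi_{k+1}\|_p\le C2^{kn/p'}\|(\mathcal{G}f)\chi_{k+1}\|_p$ using $|f|\le C\mathcal{G}f$ a.e.; summing in $k$ then gives $\sum_k|S_k|\le C\|f\|_{HK_p}$ as desired. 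A second, smaller flaw: you justify $\int f=0$ by noting that every central atom has mean zero, which is circular in the necessity direction, where no decomposition is yet available; the correct argument is that $\int f\neq0$ would force $\mathcal{G}f(x)\ge c|x|^{-n}$ at infinity, and then $\|\mathcal{G}f\|_{K_p}=\infty$ by the same annulus-by-annulus computation displayed above.
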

In fact, $HK_p$ is the localization of $H_1$ at the origin. It is easy to see that the relation between $HK_p$ and $K_p$ is similar to one between $H^1$ and $L^1$. Thus we have

\begin{theorem}\label{thm3}
Let $1<p<\infty$. Then $\mathcal {H}$ maps from $HK_p(\rz)$ to $K_p(\rz)$.
\end{theorem}

To prove this theorem, we need the following proposition from \cite{ly2}.
\begin{proposition} \label{prop1}
Let $r>0$. Then
$$\|f(r\cdot)\|_{K_p}\sim r^{-n}\|f\|_{K_p}.$$
\end{proposition}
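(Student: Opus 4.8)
The plan is to reduce everything to a scaling identity that is \emph{exact} on dyadic dilations, and then to control the non-dyadic remainder by comparison with neighbouring annuli. Throughout write $g(x)=f(rx)$. The starting point is the change of variables $y=rx$, which gives, for every $k\in\zz$,
$$\|g\chi_k\|_p^p=\int_{C_k}|f(rx)|^p\,dx=r^{-n}\int_{\{r2^{k-1}<|y|\le r2^k\}}|f(y)|^p\,dy.$$
Thus the whole question reduces to understanding how the scaled annulus $\{r2^{k-1}<|y|\le r2^k\}$ sits inside the dyadic decomposition $\{C_j\}_{j\in\zz}$.

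First I would treat the dyadic case $r=2^m$ with $m\in\zz$. Here the scaled annulus is exactly $C_{m+k}$, so the identity above becomes $\|g\chi_k\|_p=2^{-mn/p}\|f\chi_{m+k}\|_p$. Summing against the weights and reindexing with $j=m+k$,
$$\|g\|_{K_p}=\sum_{k\in\zz}2^{kn/p'}2^{-mn/p}\|f\chi_{m+k}\|_p=2^{-mn/p'-mn/p}\sum_{j\in\zz}2^{jn/p'}\|f\chi_j\|_p.$$
Since $1/p+1/{p'}=1$, the prefactor collapses to exactly $2^{-mn}=r^{-n}$, yielding the sharp identity $\|f(r\cdot)\|_{K_p}=r^{-n}\|f\|_{K_p}$ for dyadic $r$.

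For a general $r>0$ I would factor $r=2^m s$ with $m\in\zz$ and $s\in[1,2)$. Writing $h(x)=f(sx)$ one has $g(x)=h(2^m x)$, so the dyadic identity gives $\|g\|_{K_p}=2^{-mn}\|h\|_{K_p}$; it therefore suffices to prove $\|f(s\cdot)\|_{K_p}\sim\|f\|_{K_p}$ uniformly for $s\in[1,2)$ (recall that $s^{-n}\in(2^{-n},1]$ is bounded above and below, so this is equivalent to the claimed $\|f(s\cdot)\|_{K_p}\sim s^{-n}\|f\|_{K_p}$). For such $s$ the scaled annulus $\{s2^{k-1}<|y|\le s2^k\}$ is contained in $C_k\cup C_{k+1}$, so splitting the integral and using $(a^p+b^p)^{1/p}\le a+b$ yields
$$\|f(s\cdot)\chi_k\|_p\le s^{-n/p}\l(\|f\chi_k\|_p+\|f\chi_{k+1}\|_p\r).$$
Multiplying by $2^{kn/p'}$, summing over $k$, and reindexing the second sum produces $\|f(s\cdot)\|_{K_p}\le(1+2^{-n/p'})\|f\|_{K_p}$, which is one half of the equivalence. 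The reverse inequality follows by running the identical estimate on the inverse dilation $g\mapsto g(s^{-1}\cdot)$, where $s^{-1}\in(1/2,1]$ and the scaled annulus now lands in $C_{k-1}\cup C_k$. Combining the two bounds with the dyadic identity gives $\|f(r\cdot)\|_{K_p}\sim r^{-n}\|f\|_{K_p}$.

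The routine part is the change of variables together with the telescoping in the dyadic case, where the balance $1/p+1/{p'}=1$ is exactly what forces the clean exponent $-n$. The one genuinely delicate point is the non-dyadic residual $s\in[1,2)$: a dilation by such an $s$ smears each dyadic annulus across two consecutive neighbours, so one cannot hope for equality there and must settle for the two-sided comparison above, with the lower bound recovered by applying the comparison to the inverse dilation. This is precisely the place where the statement is forced to read as an equivalence ``$\sim$'' rather than a genuine identity.
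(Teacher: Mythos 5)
Your proof is correct, and it is worth noting that the paper itself offers no proof at all of this proposition: it is simply quoted from Lu and Yang \cite{ly2}, so your argument is a self-contained substitute rather than a variation on something in the text. Each step checks out: the change of variables gives $\|f(r\cdot)\chi_k\|_p^p=r^{-n}\int_{\{r2^{k-1}<|y|\le r2^k\}}|f|^p\,dy$; for $r=2^m$ the scaled annulus is exactly $C_{m+k}$ and the reindexing produces the prefactor $2^{-mn/p-mn/p'}=2^{-mn}=r^{-n}$, an exact identity; for $s\in[1,2)$ the inclusion $\{s2^{k-1}<|y|\le s2^k\}\subset C_k\cup C_{k+1}$ is right, the elementary inequality $(a^p+b^p)^{1/p}\le a+b$ is valid since $p>1$, and the reindexed sum gives the explicit constant $1+2^{-n/p'}$; the reverse bound via the inverse dilation $s^{-1}\in(1/2,1]$, whose scaled annulus lands in $C_{k-1}\cup C_k$, closes the equivalence with constants depending only on $n$ and $p$, uniformly in $r$. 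Your decomposition $r=2^m s$ in fact buys slightly more than the cited statement: an exact equality $\|f(r\cdot)\|_{K_p}=r^{-n}\|f\|_{K_p}$ for dyadic $r$, and completely explicit comparison constants in general, which makes transparent why only an equivalence ``$\sim$'' (not an identity) can hold for arbitrary $r$. If you wanted to streamline, you could skip the dyadic/residual factorization and run the two-annulus comparison directly for any $r>0$ by comparing $rC_k$ with the annuli $C_j$ for $j$ near $k+\log_2 r$, but your version keeps the constants cleanest.
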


\noindent{\it Proof of Theorem \ref{thm3}.}\q By Lemma \ref{lem2}, it is easy to see that the proof of Theorem \ref{thm3} is reduced to show that
for any central$(1,p)$-atom $a$, $$\|\nH(a)\|_{K_p}\leq C,$$
where $C$ is independent of $a$. Let $\supp a\subset B(0,r)$. If we write $\mathbbm{a}(x)=r^n a(rx)$, then
$\mathbbm{a}(x)$ is a central $(1,p)$-atom supporting on unit ball $B(0,1)$. Substituting $y= r z$,
$dy=r^n dz$, we have
\begin{eqnarray*}
\nH(a)(rx)&=&\f{1}{\upsilon_n|rx|^n}\int_{|y|<|rx|}a(y)dy\\
 &=& r^{-n}\f{1}{\upsilon_n|x|^n}\int_{|z|<|x|}a(z r)r^ndz\\
 &=&r^{-n}\nH(\mathbbm{a})(x).
\end{eqnarray*}
By Proposition \ref{prop1}, we obtain
$$\|\nH(a)\|_{K_p}\sim\|\nH(\mathbbm{a})\|_{K_p}.$$

Therefore, it suffices to show
$$\|\nH(\mathbbm{a})\|_{K_p}\leq C,$$
where $C$ is independent of $\mathbbm{a}$.

By the definition of $K_p$, we write
\begin{eqnarray*}
\|\nH(\mathbbm{a})\|_{K_p}&=&\sum_{k\in\zz}2^{kn/{p'}}\|\nH(\mathbbm{a})\chi_k\|_p\\
&=&\sum_{k\leq 1}2^{kn/{p'}}\|\nH(\mathbbm{a})\chi_k\|_p+\sum_{k>1}2^{kn/{p'}}\|\nH(\mathbbm{a})\chi_k\|_p\\
&:=&J_1+J_2.
\end{eqnarray*}
$L^p(\rz)$-boundedness of $n$-dimensional Hardy operator leads to,
$$J_1\leq \f{p}{p-1}\sum_{k\leq 1}2^{kn/{p'}}\|\mathbbm{a}\|_{L^p(\rz)}\leq\f{p}{p-1}\sum_{k\leq 1}2^{kn/{p'}}=2^{n/{p'}}p'\f{2^{n/{p'}}}{2^{n/{p'}}-1}.$$

For $k>1$, and $|x|>2^{k-1}\geq 1$, it follows from $\{y:|y|<|x|\}\supseteq \{y: |y|<1\}$ that
\begin{eqnarray*}
\|\nH(\mathbbm{a})\chi_k\|_p^p=\int_{2^{k-1}<|x|\leq 2^k}\l|\f{1}{\upsilon_n|x|^n}\int_{|y|<1}\mathbbm{a}(y)dy\r|^p dx=0,
\end{eqnarray*}
where the last equality we used that $\int_{B(0,1)}\mathbbm{a}(y)dy=0$.\qed

\section{Endpoint estimates for commutators of n-dimensional Hardy operators}

In \cite{jn}, John and Nirenberg introduced the space $\BMO$. Define
$$\BMO(\rz)=\{f: \|f\|_{\BMO(\rz)}<\infty\},$$ where
$$\|f\|_{\BMO(\rz)}=\sup_{B}\frac{1}{|B|}\int_B |f(x)-f_B|dx.$$

If one regards two functions whose difference is a constant as one,
then the space $\BMO$ is a Banach space with respect to norm $\|\cdot\|_{\BMO(\rz)}$.
John-Nirenberg inequality
implies
$$\|b\|_{\BMO(\rz)}\sim\sup_{B\subset\mathbb{R}^{n}}\left\{\frac{1}{|B|}\int_{B}|b-b_{B}|^{p}\right\}^{1/p},$$
where $1<p<\infty$. An important fact is that the dual of $H^1$ is the space $\BMO$ (cf. \cite{S1}).

\begin{definition} \cite{FLL}, \cite{FLLW}\label{d41}  Let $b$ be a locally
integrable function on ${\mathbb{R}}^{n}$.
The commutators of $n$-dimensional  Hardy operators are defined by
$$[b,\mathcal {H}]=b{\mathcal{H}}f-{\mathcal{H}}(fb).$$
\end{definition}

In general, when symbols $b$ are in BMO$(\mathbb{R}^{n})$, the
properties of commutators are worse than those of the operators
themselves (for example the singularity, \cite{Pe1}). Therefore, we
imagine $[b,\mathcal {H}]$ is not bounded from $H^1(\rz)$ to $L^1(\rz)$. Furthermore, we conclude that the commutator maps form $H^1(\rz)$ to weak $L^1(\rz)$.

\begin{proposition} \label{prop2}
 If $b\in \BMO(\rz)$, then $[b,\mathcal {H}]$ is not bounded from $H^1(\rz)$ to $L^1(\rz)$.
\end{proposition}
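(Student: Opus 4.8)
The plan is to produce a single symbol $b\in\BMO(\rz)$ together with one function $a\in H^1(\rz)$ for which $[b,\nH]a\notin L^1(\rz)$; since such an $a$ has finite $H^1$-norm, this alone rules out any estimate of the form $\|[b,\nH]f\|_{L^1}\le C\|f\|_{H^1}$, which is exactly the negation required by the proposition.

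First I would rewrite the commutator in kernel form. From Definition \ref{d41} and the formula for $\nH$, one gets
$$[b,\nH]f(x)=\f{1}{\upsilon_n|x|^n}\int_{|y|<|x|}\l(b(x)-b(y)\r)f(y)\,dy.$$
Next, I would choose the classical $\BMO$ symbol $b(x)=\log|x|$, and take $a$ to be exactly the mean-zero function appearing in the counterexample of Section 2 (a fixed scalar multiple of an $H^1$-atom, hence a member of $H^1$): namely $a=1-2^n$ on $B(0,1)$, $a=1$ on $B(0,2)\backslash B(0,1)$, and $a=0$ elsewhere.

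The decisive computation is the logarithmic moment of $a$. Working in polar coordinates, a direct evaluation gives
$$\int_{\rz}\log|y|\,a(y)\,dy=\upsilon_n\,2^n\log2\neq0.$$
Then, for $|x|>2$ the region $\{y:|y|<|x|\}$ contains the whole support of $a$, so the cancellation $\int a=0$ annihilates the $b(x)$-term and leaves
$$[b,\nH]a(x)=\f{1}{\upsilon_n|x|^n}\l(\log|x|\int a(y)\,dy-\int\log|y|\,a(y)\,dy\r)=-\f{2^n\log2}{|x|^n}.$$
Finally, integrating the modulus over $\rz\backslash B(0,2)$ and using $\int_{\rz\backslash B(0,2)}|x|^{-n}\,dx=\fz$ yields $\|[b,\nH]a\|_{L^1(\rz)}=\fz$, which establishes the proposition.

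The only genuinely delicate point is verifying that the logarithmic moment $\int\log|y|\,a(y)\,dy$ does not vanish, since an unlucky symmetric choice of atom could produce accidental cancellation and collapse the whole argument; this is where the explicit atom must be pinned down. Everything else is routine: the fact $\log|\cdot|\in\BMO(\rz)$ is classical, the local integrability of $\log|y|\,a(y)$ makes $\nH(ab)$ well defined, and the divergence of $\int_{|x|>2}|x|^{-n}\,dx$ is immediate. I would also remark that this is precisely the mechanism seen in Section 2 — a nonzero weighted moment survives the mean-value cancellation and then decays only like $|x|^{-n}$ — which is exactly what obstructs integrability here.
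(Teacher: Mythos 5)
Your proof is correct, and it takes a genuinely different --- and in fact stronger --- route than the paper. The paper proves the proposition only for $n=1$, taking $b=\chi_{(2,\infty)}$ and $f_0=\chi_{(0,2)}-\chi_{(-2,0)}$ and computing $|[b,H]f_0(x)|=2/x$ for $x>3$; crucially, this computation uses the \emph{one-sided} classical Hardy operator $Hf(x)=\frac{1}{x}\int_0^x f(t)\,dt$, whose integral over $(0,x)$ never sees the negative bump of $f_0$, so the mean-zero cancellation never engages and a characteristic-function symbol suffices. Your argument instead attacks the symmetric $n$-dimensional operator $\nH$ directly, with $b(x)=\log|x|$ and the explicit mean-zero function of Section 2: the nonvanishing logarithmic moment $\int\log|y|\,a(y)\,dy=\upsilon_n2^n\log2$ (your computation checks out) survives the cancellation $\int a=0$ and forces $[b,\nH]a(x)=-2^n\log2/|x|^n$ for $|x|>2$, which fails to be integrable. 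What your approach buys is considerable: it proves the statement as literally formulated, for $\nH$ in every dimension, whereas the paper's example does not even transfer to the symmetric operator at $n=1$ --- with $b=\chi_{(2,\infty)}$ and $f_0$ supported in $(-2,2)$ one has $bf_0\equiv0$ and $\int f_0=0$, hence $[b,\nH]f_0\equiv0$ identically. The structural point you identified is exactly right: against a symmetric averaging operator, the symbol must not be locally constant near the support of the test function, so that the weighted moment $\int b(y)a(y)\,dy$ cannot be annihilated by the cancellation of $a$; the paper's locally constant symbol can only succeed for the one-sided operator.
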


\begin{proof}
We give the proof only for the case $n=1$. Take $b(x)=\chi_{(2,\infty)}(x)$ and $f_0(x)=\chi_{(0,2)}(x)-
\chi_{(-2,0)}(x)$. Then for $x>3$, we have the following estimate
\begin{eqnarray*}
\l|[b, H]f(x)\r|&=&\l|\frac{1}{|x|}\int_0^{x}(b(x)-b(y))f_0(y)dy\r|\\
&= &\frac{1}{x}\l|\int_0^2(1-0)\times 1 dy\r|=\frac{2}{x}.
\end{eqnarray*}
So we get
$$\int_{R}\l|[b,H]f(x)\r|dx \geq \int_{3}^{\infty}\frac{2}{x}dx=\infty.$$
\end{proof}
We state an interesting result for $[b, \nH]$ in what follows.
\begin{theorem} \label{thm5}
Let $b\in \BMO(\rz)$. Then $[b, \nH]$ maps $H^1(\rz)$ to $L^{1,\infty}(\rz)$.
 \end{theorem}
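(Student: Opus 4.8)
The plan is to combine the atomic decomposition of $H^1(\rz)$ (Definition 1.2) with a careful near/far split of $[b,\nH]a$ for a single atom, exploiting the explicit kernel
$$[b,\nH]a(x)=\f{1}{\upsilon_n|x|^n}\int_{|y|<|x|}(b(x)-b(y))a(y)\,dy.$$
As in the proof of Theorem 2.1 I would first reduce to an atom $a$ supported in a ball $B(0,r)$ centered at the origin, with $\|a\|_{\infty}\leq|B(0,r)|^{-1}$ and $\int a=0$. Since $[c,\nH]=0$ for every constant $c$, I may subtract $b_{B(0,r)}$ and assume $b_{B(0,r)}=0$. Then I decompose $\rz=B(0,2r)\cup(\rz\backslash B(0,2r))$ and treat the two regions by completely different mechanisms.

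For the far region $\rz\backslash B(0,2r)$, the support condition forces $\{|y|<|x|\}\cap B(0,r)=B(0,r)$, so the term carrying $b(x)$ drops out by the vanishing moment $\int a=0$ and
$$[b,\nH]a(x)=-\f{1}{\upsilon_n|x|^n}\int_{B(0,r)}(b(y)-b_{B(0,r)})a(y)\,dy,$$
whence $|[b,\nH]a(x)|\leq \|b\|_{\BMO}/(\upsilon_n|x|^n)$ using only $\|a\|_{\infty}\leq|B(0,r)|^{-1}$ and the definition of $\BMO$. The crucial point is that this bound is a \emph{universal} profile $\sim|x|^{-n}$, which belongs to $L^{1,\infty}(\rz)$ but not to $L^1(\rz)$; this is precisely why only a weak estimate can survive, consistently with Proposition 3.4.

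For the near region $B(0,2r)$, I would write $[b,\nH]a=(b-b_{B(0,r)})\nH a-\nH((b-b_{B(0,r)})a)$ and estimate each term in $L^1(B(0,2r))$ by Hölder's inequality, the $L^p$-boundedness of $\nH$ with norm $p/(p-1)$ (from the Christ--Grafakos theorem), and the John--Nirenberg inequality. The latter controls $\|b-b_{B(0,r)}\|_{L^{p'}(B(0,2r))}$ by $C\|b\|_{\BMO}r^{n/p'}$ and $\|(b-b_{B(0,r)})a\|_{L^p}$ by $C\|b\|_{\BMO}r^{-n/p'}$, and the scaling is arranged so that these two factors cancel: each term is bounded by $C\|b\|_{\BMO}$, uniformly in $r$. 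Thus on the near region $[b,\nH]a$ is in fact in $L^1$, not merely in $L^{1,\infty}$, with norm at most $C\|b\|_{\BMO}$.

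Finally I would reassemble: for $f=\sum_j\lz_ja_j$ with $\sum_j|\lz_j|\sim\|f\|_{H^1}$, the near parts sum in $L^1$ by the genuine triangle inequality to a function of $L^1$-norm at most $C\|b\|_{\BMO}\|f\|_{H^1}$, while the far parts are dominated pointwise by $\sum_j|\lz_j|\|b\|_{\BMO}/(\upsilon_n|x|^n)\leq C\|b\|_{\BMO}\|f\|_{H^1}|x|^{-n}$, i.e.\ by a \emph{single} weak-$L^1$ function. Adding the two pieces through the quasi-triangle inequality for $L^{1,\infty}$ yields the result. I expect the main obstacle to be the near-region estimate, namely making the John--Nirenberg and $L^p$ bounds match the atomic scaling exactly, together with the reduction to origin-centered atoms: for an atom far from the origin the intermediate shell $\{|x_0|-r\leq|x|\leq|x_0|+r\}$ is the one place where a logarithmic factor threatens and must be absorbed. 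The decomposition into an $L^1$ near part and a pointwise-dominated far part is exactly what lets me sidestep the usual difficulty of summing weak-$L^1$ quasinorms over atoms.
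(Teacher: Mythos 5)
Your single-atom estimates are, in substance, the same as the paper's: the paper likewise splits $b(x)-b(y)=(b(x)-b_{B(0,r)})+(b_{B(0,r)}-b(y))$, kills the first piece away from the support of the atom by the moment condition $\int a=0$, handles the near contributions by H\"older's inequality together with the $L^p$-boundedness of $\nH$ (the paper takes $p=2$) and John--Nirenberg, and extracts the weak-type behaviour from exactly the $|x|^{-n}$ tail you isolate. Where you genuinely depart from the paper is in the passage from a single atom to a general $f=\sum_j\lz_j a_j$. The paper simply asserts that it is enough to prove a uniform weak-type bound on atoms; since $\|\cdot\|_{L^{1,\infty}}$ is only a quasinorm and is not countably subadditive, that reduction is the weakest step of the paper's argument. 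Your decomposition --- near parts summed with the genuine triangle inequality in $L^1(\rz)$, far parts dominated pointwise by the single function $C\|b\|_{\BMO}\left(\sum_j|\lz_j|\right)|x|^{-n}\in L^{1,\infty}(\rz)$ --- is a correct repair of precisely this step, and it is the main respect in which your argument improves on the paper's.

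The one incomplete point in your plan (shared with, and indeed hidden by, the paper's phrase ``similar to the proof of Theorem 2.1'') is the reduction to atoms centered at the origin: neither $\nH$ nor $[b,\nH]$ commutes with translations, so this is not a harmless normalization, and atoms supported in $B(x_0,r)$ with $|x_0|>2r$ must be treated directly; you flag this obstacle but do not resolve it. It does work out, along the lines you anticipate. For such an atom, $[b,\nH]a$ vanishes for $|x|<|x_0|-r$; for $|x|>|x_0|+r$ the full-cancellation argument gives the universal bound $\|b\|_{\BMO}/(\upsilon_n|x|^n)$, and the piece $\nH\bigl((b-b_{B(x_0,r)})a\bigr)$ obeys that bound everywhere; on the shell $S=\{|x_0|-r\leq|x|\leq|x_0|+r\}$ the remaining piece $(b-b_{B(x_0,r)})\nH a$ has $L^1$-norm at most $\upsilon_n^{-1}(|x_0|-r)^{-n}\int_S|b-b_{B(x_0,r)}|$, and with $R\sim|x_0|$ one has
\begin{equation*}
\int_S|b-b_{B(x_0,r)}|\leq \int_{B(x_0,R)}|b-b_{B(x_0,R)}|+|S|\,|b_{B(x_0,R)}-b_{B(x_0,r)}| \leq C\|b\|_{\BMO}\Bigl(|x_0|^n+r|x_0|^{n-1}\log(|x_0|/r)\Bigr),
\end{equation*}
so after dividing by $(|x_0|-r)^n\geq(|x_0|/2)^n$ the logarithm appears multiplied by $r/|x_0|$ and is absorbed, since $t\log(1/t)$ is bounded on $(0,1)$. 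With this shell estimate written out (and the easy observation that an atom with $|x_0|\leq 2r$ is a bounded multiple of an origin-centered atom of radius $3r$), your proof is complete; without it, it retains the same gap as the paper's.
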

\begin{proof}
Similar to the proof of Theorem 2.1, it is enough to prove that $$\lz|\{x\in \rz:|[b, \nH](\tilde{a})(x)|>\lz\}|\leq C\|b\|_{\BMO}\|\tilde{a}\|_{H^1},$$
 where $C$ is independent of $\tilde{a}$.
Suppose that $\supp \tilde{a}\subset B(0,r)$ for $r>0$.
{\allowdisplaybreaks
\begin{eqnarray*}
[b, \nH](\tilde{a})(x)&=&\f{1}{\upsilon_n|x|^n}\int_{|y|<|x|}\tilde{a}(y)(b(x)-b(y))dy\\
&=&\f{1}{\upsilon_n|x|^n}\int_{\{|y|<|x|\}\cap B(0,r)}\tilde{a}(y)(b(x)-b_{B(0,r)})dy\\
&&\q+ \f{1}{\upsilon_n|x|^n}\int_{\{|y|<|x|\}\cap B(0,r)}\tilde{a}(y)(b_{B(0,r)}-b(y))dydx\\
&:=&L_1+L_2.
\end{eqnarray*}
}
In the last two estimates we used that $\supp \tilde{a}\subset B(0,r)$.

So we have
$$
|\{x\in \rz:|[b, \nH](\tilde{a})(x)|>\lz\}|\leq|\{x\in \rz: |L_1|>\lz/2\}|+|\{x\in \rz: |L_2|>\lz/2\}|.
$$
And
\begin{eqnarray*}
\frac{\lz}{2}|\{x\in \rz: |L_1|>\lz/2\}|&\leq & \int_{\rz}|L_1|dx\\
 &=&\int_{B(0,r)}|b(x)-b_{B(0,r)}||\nH (\tilde{a})(x)|dx\\
 &&+\int_{\rz\backslash B(0,r)}|b(x)-b_{B(0,r)}||\nH (\tilde{a})(x)|dx\\
&:=& L_{11}+L_{12}.
\end{eqnarray*}

Since $\nH$ is bounded on $L^p(\rz)$ for all $1<p<\infty$, together with H\"{o}lder's inequality, we obtain
\begin{eqnarray*}
L_{11}&\leq & \l(\int_{B(0,r)}|b(x)-b_{B(0,r)}|^2 dx\r)^{1/2}\l(\int_{B(0,r)}|\nH (\tilde{a})(x)|^2 dx\r)^{1/2}\\
&\leq& 2\l(\int_{B(0,r)}|b(x)-b_{B(0,r)}|^2 dx\r)^{1/2}\l(\int_{B(0,r)}|\tilde{a}(x)|^2 dx\r)^{1/2}\\
&=&2\frac{|B(0,r)|^{1/2}}{|B(0,r)|}\l(\int_{B(0,r)}|b(x)-b_{B(0,r)}|^2 dx\r)^{1/2}\\
&\leq &C\|b\|_{\BMO},
\end{eqnarray*}
where we used the condition (ii) of $\tilde{a}$.

If $x\in \rz\backslash B(0,r)$, then $\{y: |y|<|x|\}\cap\{y: |y|<r\}=\{y: |y|<r\}$. Using $\int\tilde{a}(y)dy=0$ and $\supp \tilde{a}=B(0,r)$,
we arrive at
$$\nH (\tilde{a})(x)=\f{1}{\upsilon_n|x|^n}\int_{|y|<r}\tilde{a}(y)dy=0,$$   and hence $L_{12}=0$.

We next estimate the  term $|\{x\in \rz: |L_2|>\lz/2\}|$ which is divided into two parts:
$|\{x\in B(0,r): |L_2|>\lz/2\}|$ and
$|\{x\in \rz\backslash B(0,r): |L_2|>\lz/2\}|$.
And
\begin{eqnarray*}
&&\f{2}{\lz}|\{x\in B(0,r): |L_2|>\lz/2\}|\leq \int_{B(0,r)}|L_2|dx \\
&=&\int_{B(0,r)}\l| \f{1}{\upsilon_n|x|^n}\int_{\{|y|<|x|\}\cap B(0,r)}\tilde{a}(y)(b_{B(0,r)}-b(y))dy\r| dx\\
&:=& L_{21}.
\end{eqnarray*}

Using H\"{o}lder's inequality, we get
\begin{eqnarray*}
L_{21}&\leq &\int_{B(0,r)} \f{1}{\upsilon_n|x|^n}\l(\int_{B(0,r)}|(b_{B(0,r)}-b(y))|^2dy\r)^{1/2} \l(\int_{|y|<|x|}|\tilde{a}(y)|^2dy\r)^{1/2}dx\\
&\leq & \int_{B(0,r)} \f{1}{\upsilon_n|x|^n}\f{|B(|x|)|^{1/2}}{|B(0,r)|^{1/2}}\l(\f{1}{|B(0,r)|}\int_{B(0,r)}|(b_{B(0,r)}-b(y))|^2dy\r)^{1/2}dx\\
&\leq &C\|b\|_{\BMO}.
\end{eqnarray*}

For the last term, we have

\begin{eqnarray*}
&&|\{x\in \rz\backslash B(0,r): |L_2|>\lz/2\}|\\
& = & \l|\l\{x\in \rz:  r<|x|\leq \l(\f{2}{\upsilon_n\lz}\int_{|y|<r}\l|b(y)-b_{B(0,r)}\r|\l|\tilde{a}(y)\r|dy\r)^{1/n}\r\}\r|\\
&=&\omega_n\int_{r}^{\l(\f{2}{\upsilon_n\lz}\int_{|y|<r}\l|b(y)-b_{B(0,r)}\r|\l|\tilde{a}(y)\r|dy\r)^{1/n}} t^{n-1} dt\\
&\leq& \frac{2}{\lz}\int_{|y|<r}\l|b(y)-b_{B(0,r)}\r|\l|\tilde{a}(y)\r|dy.
\end{eqnarray*}
By H\"{o}lder's inequality and the size estimate for $\tilde{a}$, the above integral is bounded by
\begin{eqnarray*}
&&\l(\int_{|y|<r}|(b_{B(0,r)}-b(y))|^2dy\r)^{1/2} \l(\int_{|y|<r}|\tilde{a}(y)|^2dy\r)^{1/2}dx\\
&\leq & C\|b\|_{\BMO}.
\end{eqnarray*}
Summing the estimates, we complete the proof.\end{proof}

Some comments are further put forward in the following part. In \cite{LY1}, Lu and Yang introduced the definition of central
$\BMO$ space as follows.

\begin{definition}\label{d42}
Let $1< q<\infty$. A
function $f\in L^q_{loc}(\rz)$ is said to belong to the space
$\CBMO^{q}(\rz)$(central {$\BMO$} space), if
$$\sup_{r>0}\left(\frac{1}{|B(0,r)|}\int_{B(0,r)}|f(x)-f_{B}|^{q} dx\right)^{1/q}=
\|b\|_{\CBMO^{q}(\rz} <\infty,$$ where
$$f_{B}=\frac{1}{|B(0,r)|}\int_{B(0,r)}f(x)dx.$$
\end{definition}
From the example of p.326 in \cite{Ko}, it is easy to see that
$\BMO({\mathbb{R}}^n)$ is a proper subset of ${\CBMO}^{q}(\rz)$,
where $1\leq q<\infty$. The space $\CBMO^{q}({\mathbb{R}}^n)$ can be regarded as a local
version of $\BMO(\rz)$ at the origin. The dual of $HK_p$ is $\CBMO^{p'}$, $1<p<\infty$ (cf. \cite{Ga}).

The same proof of Theorem 3.3 remains valid for $b\in \CBMO^q(\rz)$, $1<q<\infty$.
\begin{proposition} \label{cor1}
 Let $1<q<\infty$ and  $b\in \CBMO^q(\rz)$.  Then $[b, \nH]$ maps $H^1(\rz)$ to $L^{1,\infty}(\rz)$.
\end{proposition}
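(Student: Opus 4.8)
The plan is to transcribe the proof of Theorem~\ref{thm5} almost verbatim, replacing every appeal to the $\BMO$ norm by the corresponding appeal to the $\CBMO^q$ norm. The decisive structural feature is that, after reducing to a normalized atom $\tilde a$ supported on $B(0,r)$ with $\int\tilde a=0$ (exactly as in the proofs of Theorem~2.1 and Theorem~\ref{thm5}), \emph{every} ball that enters the estimates is centered at the origin. Since the defining supremum of $\CBMO^q(\rz)$ ranges precisely over origin-centered balls $B(0,r)$, the central $\BMO$ norm controls all the oscillation quantities that arise, and no oscillation over an off-center ball is ever required. In this sense the $\CBMO^q$ statement is the natural one: both the averaging region $\{|y|<|x|\}$ of $\nH$ and the support of $\tilde a$ are adapted to the origin.

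Concretely, I would first reduce, as in Theorem~\ref{thm5}, to proving
$$\lz\,\l|\l\{x\in\rz:|[b,\nH](\tilde a)(x)|>\lz\r\}\r|\leq C\|b\|_{\CBMO^q}\|\tilde a\|_{H^1},$$
and then use the same splitting
$$[b,\nH](\tilde a)(x)=L_1+L_2,$$
where $L_1$ carries the factor $b(x)-b_{B(0,r)}$ and $L_2$ carries the factor $b_{B(0,r)}-b(y)$. The level set is bounded through
$$\l|\{|L_1|>\lz/2\}\r|+\l|\{|L_2|>\lz/2\}\r|,$$
and each piece is handled by the Chebyshev/layer-cake bound together with H\"older's inequality, just as before. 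As in Theorem~\ref{thm5}, the contribution $L_{12}$ over $\rz\backslash B(0,r)$ vanishes identically, since the cancellation $\int\tilde a=0$ forces $\nH\tilde a(x)=0$ there.

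The single genuine modification is the choice of H\"older exponents. Whereas Theorem~\ref{thm5} pairs $L^2$ with $L^2$, here I would pair $L^q$ (applied to $b-b_{B(0,r)}$) with $L^{q'}$ (applied to $\tilde a$ or to $\nH\tilde a$). The oscillation factor is then controlled directly by the definition of $\CBMO^q$,
$$\l(\int_{B(0,r)}|b(y)-b_{B(0,r)}|^q\,dy\r)^{1/q}\leq |B(0,r)|^{1/q}\|b\|_{\CBMO^q},$$
while the atom factor is controlled by its size estimate $\|\tilde a\|_\infty\leq|B(0,r)|^{-1}$, giving $\|\tilde a\|_{q'}\leq|B(0,r)|^{1/q'-1}=|B(0,r)|^{-1/q}$. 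The two powers of $|B(0,r)|$ cancel, leaving the $r$-independent bound $C\|b\|_{\CBMO^q}$. For the terms containing $\nH\tilde a$ I would invoke the $L^{q'}(\rz)$-boundedness of $\nH$, which holds since $1<q'<\infty$ whenever $1<q<\infty$, in place of its $L^2$-boundedness; this again returns a factor $|B(0,r)|^{-1/q}$ that cancels against $|B(0,r)|^{1/q}$.

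I expect the work to be almost entirely bookkeeping, so the only point demanding care is the verification that the central $\BMO$ hypothesis genuinely suffices: one must confirm that the argument never implicitly uses the oscillation of $b$ over a ball \emph{not} centered at the origin. This is exactly where the reduction to an origin-centered atom is essential, since it guarantees that the averaging region of $\nH$ and the support $B(0,r)$ of $\tilde a$ are both origin-centered, so that the full-strength $\BMO$ norm is never needed and the weaker $\CBMO^q$ norm closes every estimate.
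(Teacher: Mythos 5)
Your proposal is correct and coincides with the paper's own treatment: the paper "proves" this proposition by simply asserting that the proof of Theorem~3.3 remains valid with $\CBMO^q$ in place of $\BMO$, which is exactly the transcription you carry out. If anything, your write-up is more careful than the paper's, since your replacement of the $(2,2)$ H\"older pairing by $(q,q')$ is genuinely necessary when $1<q<2$ (central $\BMO$ admits no John--Nirenberg self-improvement, so $\|b\|_{\CBMO^q}$ does not control the $L^2$ oscillation in that range), whereas for $q\geq 2$ the paper's original exponents would already suffice.
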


\noindent{\bf Acknowledgement}\quad This work is supported by the Key Laboratory of Mathematics and Complex System (Beijing Normal
University), Ministry of Education, China. The work is also supported by PCSIRT in University.

\bibliographystyle{amsplain}

\end{document}